\documentclass[a4paper,12pt]{article}

\usepackage[english]{babel}
\usepackage{amssymb,amsthm, amsmath}

\usepackage{graphicx,color}
\usepackage{cleveref}
\usepackage{caption}
\usepackage[font={footnotesize}]{subcaption}

\newtheorem{thm}{Theorem}[section]
\newtheorem{lem}[thm]{Lemma}
\newtheorem{pro}[thm]{Proposition}
\theoremstyle{definition}

\crefname{thm}{Theorem}{Theorems}
\crefname{lem}{Lemma}{Lemmas}
\crefname{pro}{Proposition}{Propositions}
\crefname{figure}{Figure}{Figures}
\crefname{table}{Table}{Tables}

\numberwithin{figure}{section}
\numberwithin{table}{section}

\title{Betti numbers of cut ideals of trees}

\author{
Samu Potka and Camilo Sarmiento
}
\date{}

\begin{document}

\maketitle

\begin{abstract}
Cut ideals, introduced by Sturmfels and Sullivant, are used in phylogenetics and algebraic statistics.
We study the minimal free resolutions of cut ideals of tree graphs. By employing basic methods from topological combinatorics, we obtain upper bounds for the Betti numbers of this type of ideals. These take the form of simple formulas on the number of vertices, which arise from the enumeration of induced subgraphs of certain incomparability graphs associated to the edge sets of trees. 
\end{abstract}

\section{Introduction}

Let $G$ be a simple graph. We denote the set of its vertices by $V(G)$ and the set of its edges by $E(G)$. By a \emph{cut} $A|B$ of $G$, we mean a partition of $V(G)$ into two subsets, $A,B\subseteq V(G)$ (so that $A\cap B=\emptyset$ and $A\cup B=V(G)$). Partitions are considered unordered, hence the number of cuts of a graph $G$ is $2^{|V(G)|-1}$.
 
Note that a given cut $A|B$ also partitions  the set of edges into two subsets: $S_{A|B}\subseteq E(G)$, consisting of those edges whose endpoints lie in different parts, and $T_{A|B}\subseteq E(G)$, consisting of those edges whose endpoints lie in the same part.

We may associate a toric ideal to a graph $G$ as follows. Fix a field $\Bbbk$, and introduce the polynomial rings $R_G:=\Bbbk[r_{A|B}:A|B \text{ a cut of } G ]$ and $S_G:=\Bbbk[s_e,t_e: e\in E(G)] $, in $2^{|V(G)|-1}$ and $2|E(G)|$ indeterminates, respectively. Define the ring homomorphism:
\begin{align}
& \phi_G:R_G \to S_G \nonumber \\
 & r_{A|B} \mapsto \prod_{e\in S_{A|B}}s_e\cdot \prod_{e\in T_{A|B}}t_e \nonumber.
\end{align}
The \emph{cut ideal} of a graph $G$ is:
\begin{displaymath}
I_G:=\ker \phi_G\subset R_G.
\end{displaymath}

Cut ideals were introduced by Sturmfels and Sullivant in \cite{SS06}. They showed how to obtain generators for the cut ideal of a graph $G$ in terms of the generators of two cut ideals $I_{G_1}$ and $I_{G_2}$ in the case where $G$ is a zero-, one- or two-sum of two graphs $G_1$ and $G_2$. They also obtained necessary and sufficient conditions for the toric variety $\mathbf{V}(I_G)$ to be smooth, among other results. Later, Nagel and Petrovi\'c \cite{NP08} proved that cut ideals of ring graphs admit a quadratic Gr\"obner basis and established algebraic properties which derive from this fact, such as Cohen-Macaulayness and Koszulness of the coordinate ring $R_G/I_G$. In \cite{Eng11}, Engstr\"om proved a conjecture presented in \cite{SS06}, namely that the cut ideals of $K_4$-minor free graphs are generated by quadrics.

We shall study minimal free resolutions of cut ideals of the class of \emph{tree graphs}. A tree graph is a connected graph without cycles. From \cite{NP08}, we know that their cut ideals are arithmetically Gorenstein, and from \cite{Eng11} that they are generated by quadrics. We denote a tree on $n$ vertices by $T_n$. By applying some topological ideas introduced by Engstr\"om and Dochtermann in \cite{DE09}, we were able to obtain estimates for some of the Betti numbers of the cut ideals of $T_n$:

\begin{thm}
\label{thetheorem}
For the Betti numbers of $I_{T_{n+1}}$, we have the following bounds, which hold independently of the underlying field:
\begin{align}
\beta_{0,2}(I_{T_{n+1}}) \leq \beta_{0,2}(\mathrm{in}\,I_{T_{n+1}}) &= \frac{1}{2}\left(4^{n} - 2\! \cdot \! 3^{n} +2^{n} \right) \nonumber \\
\beta_{1,3}(I_{T_{n+1}}) \leq \beta_{1,3}(\mathrm{in}\,I_{T_{n+1}}) &=  \frac{1}{3}\left( 8^{n}-3\! \cdot \! 6^{n} +3\! \cdot \! 4^{n}-2^{n} \right)\nonumber \\
\beta_{2,4}(I_{T_{n+1}}) \leq \beta_{2,4}(\mathrm{in}\,I_{T_{n+1}}) &=  \frac{1}{8} \left(16^{n}-4\! \cdot \!12^{n}+6\! \cdot \! 8^{n}+2\! \cdot \!7^{n}-4\! \cdot \!6^{n}+4\! \cdot \!5^{n}\right. \nonumber \\
&\qquad \left. -9\! \cdot \! 4^{n}+2\! \cdot \! 3^{n}+2\! \cdot \!2^{n}\right)\nonumber \\
\beta_{1,4}(I_{T_{n+1}}) \leq \beta_{1,4}(\mathrm{in}\,I_{T_{n+1}}) &=  \frac{1}{4} \left( 7^{n}-4\! \cdot \! 6^{n}+6\! \cdot \! 5^{n}-4\! \cdot \! 4^{n}+3^{n}\right) \nonumber \\
\beta_{3,5}(I_{T_{n+1}}) \leq\ \beta_{3,5}(\mathrm{in}\,I_{T_{n+1}}) &=  \frac{1}{60} \left(2\! \cdot \! 32^{n}-10\! \cdot \! 24^{n}+30\! \cdot \! 20^{n}-120\! \cdot \! 18^{n}+30\! \cdot \! 17^{n}\right. \nonumber \\
& \qquad \left.-40\! \cdot \! 16^{n}+180\! \cdot \! 15^{n}+375\! \cdot \! 14^{n}-420\! \cdot \! 13^{n}-180\! \cdot \! 12^{n} \right. \nonumber\\
&\qquad \left. +200\! \cdot \! 11^{n}-280\! \cdot \! 10^{n}-220\! \cdot \! 9^{n}+985\! \cdot \! 8^{n}-720\! \cdot \! 7^{n}\right. \nonumber \\
& \qquad \left.+655\! \cdot \! 6^{n}-710\! \cdot \! 5^{n} +35\! \cdot \! 4^{n}+340\! \cdot \! 3^{n}-132\! \cdot \! 2^{n} \right) \nonumber \\
\beta_{2,5}(I_{T_{n+1}}) \leq \beta_{2,5}(\mathrm{in}\,I_{T_{n+1}}) &=  \frac{1}{12} \left(3\! \cdot \! 14^n -12\! \cdot \! 12^n -2\! \cdot \! 11^n + 22\! \cdot \!
10^n - 2\! \cdot \! 9^n\right. \nonumber \\
& \qquad\left.-9\! \cdot \! 8^{n} - 6\! \cdot \! 7^n + 9\! \cdot \! 6^n - 10\! \cdot \! 5^n + 11\! \cdot \!
4^{n} - 4\! \cdot \! 3^n\right) \nonumber \\
\nonumber
\end{align}
\end{thm}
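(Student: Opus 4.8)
The inequalities $\beta_{i,j}(I_{T_{n+1}}) \le \beta_{i,j}(\mathrm{in}\,I_{T_{n+1}})$ are the routine part. By \cite{NP08} the ideal $I_{T_{n+1}}$ admits a quadratic Gr\"obner basis, so its initial ideal with respect to the corresponding term order is a squarefree monomial ideal generated in degree two; since the graded Betti numbers of an ideal never exceed those of any of its initial ideals (upper semicontinuity along the flat Gr\"obner degeneration, valid over every field), the left inequalities follow uniformly in $\Bbbk$. Everything therefore reduces to computing the quantities $\beta_{i,j}(\mathrm{in}\,I_{T_{n+1}})$ exactly, and this is where the topological input enters.

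The first step is to identify $\mathrm{in}\,I_{T_{n+1}}$ combinatorially. As $T_{n+1}$ is a tree with $n$ edges, the cut-set map $A|B\mapsto S_{A|B}$ is a bijection from the cuts of $T_{n+1}$ onto the subsets $S\subseteq E(T_{n+1})\cong[n]$, since after contracting each component of $T_{n+1}\setminus S$ one obtains a tree, which is uniquely $2$-colourable; both sides have cardinality $2^{n}$. Writing $r_S\mapsto \prod_{e\in S}s_e\prod_{e\notin S}t_e$, a one-line computation gives $\phi_{T_{n+1}}(r_S r_{S'})=\phi_{T_{n+1}}(r_{S\cap S'}\,r_{S\cup S'})$, so the quadrics $r_S r_{S'}-r_{S\cap S'}\,r_{S\cup S'}$, indexed by incomparable pairs $S,S'\subseteq[n]$, lie in $I_{T_{n+1}}$; by the quadratic generation of \cite{Eng11} they generate it. Thus $I_{T_{n+1}}$ is the join--meet ideal of the Boolean lattice $B_n=2^{[n]}$, and its classical (Hibi) Gr\"obner basis has initial ideal equal to the edge ideal of the incomparability graph $\mathrm{Inc}(B_n)$, independently of the shape of the tree. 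Consequently the Stanley--Reisner complex of $\mathrm{in}\,I_{T_{n+1}}$ is the order complex $\Delta(B_n)$, whose faces are the chains of $B_n$, and the induced subcomplex on a vertex set $W\subseteq B_n$ is exactly the order complex $\Delta(W)$ of the induced subposet.

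Now I would apply Hochster's formula in the form
\begin{displaymath}
\beta_{i,j}(\mathrm{in}\,I_{T_{n+1}})=\sum_{\substack{W\subseteq B_n\\ |W|=j}}\dim_{\Bbbk}\widetilde{H}_{\,j-i-2}\bigl(\Delta(W);\Bbbk\bigr).
\end{displaymath}
The key simplification is that for each requested Betti number the homological index $d:=j-i-2$ lies in $\{0,1\}$: the pairs $(i,j)$ are $(0,2),(1,3),(2,4),(3,5)$ with $d=0$ and $(1,4),(2,5)$ with $d=1$. Hence I only ever need $\widetilde{H}_0(\Delta(W))$, which records one less than the number of connected components of the comparability graph of $W$, and $\widetilde{H}_1(\Delta(W))$, the space of independent $1$-cycles of that order complex; both are elementary to read off for a poset on at most five elements, and both are free over $\mathbb{Z}$ for the posets arising here, which accounts for the field-independence. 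Each Betti number thus becomes a weighted count of induced subposets $W\subseteq B_n$ with $|W|=j$, weighted by the relevant reduced Betti number of $\Delta(W)$.

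What remains, and what constitutes the real work, is the enumeration. I would classify the isomorphism types of induced subposets of $B_n$ on $j\le 5$ elements, discard those whose order complex has vanishing reduced homology in the relevant degree, and record the contributing Betti number for the survivors. For each surviving type $P$ I would count its induced copies in $B_n$ as a function of $n$: after fixing a labelling $S_1,\dots,S_j$, each coordinate of $[n]$ is assigned independently to a profile in $\{0,1\}^{j}$ respecting the prescribed containments, so the number of order-relation-preserving tuples is $c^{\,n}$ with $c$ the number of order ideals (equivalently antichains) of $P$; an inclusion--exclusion over the coarser patterns then isolates tuples realising $P$ exactly, and dividing by $|\mathrm{Aut}(P)|$ yields the genuine induced copies. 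Summing the weighted counts gives the stated closed forms, and one checks directly that for the antichain on two elements this reproduces $\tfrac12(4^{n}-2\cdot 3^{n}+2^{n})$, the leading bases $8^{n},16^{n},32^{n}$ arising as the numbers of order ideals of the antichains on $3,4,5$ elements. The main obstacle is precisely the combinatorial bookkeeping for $j=5$: both the number of isomorphism types and the depth of the inclusion--exclusion grow rapidly, which is exactly why $\beta_{3,5}$ and $\beta_{2,5}$ carry so many exponential terms, and organising the enumeration so that its completeness is verifiable is the part demanding the most care.
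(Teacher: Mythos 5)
Your proposal is correct and its overall architecture coincides with the paper's: realize $\mathrm{in}\,I_{T_{n+1}}$ as the edge ideal of the incomparability graph of the Boolean lattice $B_n$ (equivalently, the Stanley--Reisner ideal of its order/independence complex), apply Hochster's formula so that each $\beta_{i,j}$ becomes a homology-weighted count of $j$-element induced subposets, evaluate those counts by coordinate-wise inclusion--exclusion, and conclude with upper semicontinuity (\cref{lem:upper}), with field independence coming from torsion-freeness of the homology of the small complexes involved. The one genuinely different step is the route to the initial ideal. The paper proves directly (\cref{gro}, adapting Sturmfels) that the join--meet binomials form a Gr\"obner basis of the toric ideal $I_{T_{n+1}}$ under a weight order; that argument is self-contained and uses neither \cite{Eng11} nor \cite{NP08}. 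You instead identify $I_{T_{n+1}}$ with the join--meet (Hibi) ideal of $B_n$ by combining Engstr\"om's quadratic generation with the classical Hibi Gr\"obner basis for distributive lattices; this is quicker given the literature, but it contains an unstated step: quadratic generation gives $I=\langle I_2\rangle$, and you must still check that $I_2$ is spanned by the join--meet binomials. This is true --- any degree-two binomial $r_Sr_{S'}-r_Ur_{U'}$ in the kernel forces, edge by edge, $S\cap S'=U\cap U'$ and $S\cup S'=U\cup U'$, hence equals a difference of two join--meet binomials --- but without this observation, ``they generate it'' is not a consequence of quadratic generation alone. (Your opening appeal to \cite{NP08} is also superfluous: semicontinuity holds for every term order, and the order you actually compute with is the Hibi one.) Finally, your enumeration scheme --- $c^{\,n}$ order-preserving tuples with $c$ the number of order ideals of $P$, inclusion--exclusion down to exact induced copies, division by $|\mathrm{Aut}(P)|$ --- is precisely the computation the paper delegates to its script and homology tables, and, like the paper, you carry out only the smallest case in print, so on that point your proposal is no less complete than the published argument.
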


We emphasize that, since the main algebraic results used to obtain this bounds, that is Hochster's formula (cf. \cref{thm:hochster} below) and upper semi-continuity of Betti numbers (cf. \cref{lem:upper}), apply for an arbitrary field, our arguments adapt almost directly to obtain bounds for the Betti numbers of cut ideals of trees over any field (see last paragraph in the proof of \cref{thetheorem}).  

\subsection{Definitions}
We recall some definitions that are necessary for presenting our proof of \cref{thetheorem}. A \emph{simplicial complex} $\Delta$ is a collection of subsets of a finite base set $[n]:=\{1,2,\ldots , n\}$ which is closed under taking subsets. The \emph{independence complex} $\mathsf{Ind}(G)$ of a graph $G$ is the simplicial complex on $V(G)$ whose faces are the subsets of $V(G)$ which are not adjacent in $G$. The \emph{Stanley-Reisner ideal} of a simplicial complex $\Delta$ on $n$ vertices is the square-free monomial ideal:
\begin{displaymath}
I_\Delta:=\langle x_{i_1}x_{i_2}\ldots x_{i_r}: (i_1,i_2,\ldots,i_r)\notin \Delta \rangle\subseteq \Bbbk[x_1,\ldots,x_n].
\end{displaymath}
One of the cornerstones of combinatorial commutative algebra is \emph{Hochster's formula}, relating the Betti numbers of the minimal free resolution of a Stanley-Reisner ideal $I_{\Delta}$, with the homology of the subcomplexes of $\Delta$: 
\begin{thm}[Hochster's formula, \cite{MS05}]\label{thm:hochster}
For $i > 0$, the Betti numbers $\beta_{i,j}$ of the Stanley-Reisner ideal of a simplicial complex $\Delta$ are given by:
\begin{equation}
\beta_{i,j}\left(I_\Delta\right) = \sum_{\substack{F\subseteq V(\Delta) \\ |F|=j}}\dim_\Bbbk \tilde{H}_{j-i-1}(\Delta[F],\Bbbk),
\label{hoch}
\end{equation}
where $\Delta[F]$ refers to the subcomplex of $\Delta$ induced by the vertices in $F$.
\end{thm}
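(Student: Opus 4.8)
The plan is to prove \eqref{hoch} by exploiting the fine $\mathbb{Z}^n$-grading carried by the monomial ideal $I_\Delta$ and computing the relevant $\mathrm{Tor}$ modules through the Koszul complex. Write $S=\Bbbk[x_1,\dots,x_n]$ and recall that, by definition, $\beta_{i,j}=\dim_\Bbbk \mathrm{Tor}^S_i(\Bbbk,S/I_\Delta)_{(j)}$. Because $I_\Delta$ is generated by monomials, the whole minimal free resolution of $S/I_\Delta$ is $\mathbb{Z}^n$-graded, so each coarse Betti number refines as $\beta_{i,j}=\sum_{|\mathbf a|=j}\beta_{i,\mathbf a}$ with $\beta_{i,\mathbf a}=\dim_\Bbbk \mathrm{Tor}^S_i(\Bbbk,S/I_\Delta)_{\mathbf a}$. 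First I would resolve the residue field $\Bbbk$ by the Koszul complex $K_\bullet=\Lambda^\bullet S^n$, whose basis elements $e_\sigma=e_{i_1}\wedge\cdots\wedge e_{i_p}$ (for $\sigma=\{i_1<\cdots<i_p\}\subseteq[n]$) are placed in multidegree $\mathbf e_\sigma=\sum_{k\in\sigma}\mathbf e_k$ so that the complex becomes graded. Then $\mathrm{Tor}^S_i(\Bbbk,S/I_\Delta)$ is the $i$-th homology of $K_\bullet\otimes_S S/I_\Delta$, and it suffices to analyse this complex one multidegree at a time.

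The heart of the argument is the analysis of the strand $(K_\bullet\otimes_S S/I_\Delta)_{\mathbf a}$ in a fixed degree $\mathbf a$. Here I would use that $S/I_\Delta$ has a $\Bbbk$-basis consisting of the monomials $\mathbf x^{\mathbf b}$ whose support $\mathrm{supp}(\mathbf b)$ is a face of $\Delta$; consequently $(S/I_\Delta)_{\mathbf b}$ is one-dimensional when $\mathbf b\geq 0$ and $\mathrm{supp}(\mathbf b)\in\Delta$, and zero otherwise. Since the summand $(S/I_\Delta)\cdot e_\sigma$ contributes $(S/I_\Delta)_{\mathbf a-\mathbf e_\sigma}$ in degree $\mathbf a$, a short homotopy argument (contracting a coordinate $k$ with $a_k\geq 2$, which lies in every relevant support regardless of whether $k\in\sigma$) shows the strand is acyclic unless $\mathbf a$ is squarefree; this already localizes all nonzero Betti numbers in squarefree degrees $\mathbf a=\mathbf e_F$ with $F\subseteq[n]$. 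For such an $F$ with $|F|=j$, the nonzero summands in homological degree $i$ are indexed by the subsets $\sigma\subseteq F$ with $|\sigma|=i$ for which $F\setminus\sigma\in\Delta$; reindexing by $\tau=F\setminus\sigma$ identifies this term with the $\Bbbk$-span of the faces $\tau$ of the induced subcomplex $\Delta[F]$ of dimension $|\tau|-1=j-i-1$, i.e.\ with the reduced cochain group $\tilde C^{\,j-i-1}(\Delta[F];\Bbbk)$.

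It then remains to match the maps. I would verify that, under this identification, the Koszul differential (multiplication by $x_k$ followed by deletion of $k$ from $\sigma$, with the usual alternating signs) corresponds exactly to adjoining the vertex $k$ to $\tau$, that is, to the simplicial coboundary $\tilde C^{\,j-i-1}(\Delta[F];\Bbbk)\to \tilde C^{\,j-i}(\Delta[F];\Bbbk)$ up to sign. Granting this, the $i$-th homology of the strand is $\tilde H^{\,j-i-1}(\Delta[F];\Bbbk)$, which over a field is isomorphic to $\tilde H_{j-i-1}(\Delta[F];\Bbbk)$; summing the fine Betti numbers over all $F$ with $|F|=j$ yields \eqref{hoch}. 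Because every step is purely $\Bbbk$-linear, the equality holds over an arbitrary field, the only field-dependence being the one already present in the simplicial homology groups on the right-hand side.

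I expect the sign-matching of the differentials to be the main technical obstacle: one must fix compatible orientations of the Koszul basis $e_\sigma$ and of the simplices $\tau$ so that the alternating Koszul signs reproduce the incidence numbers of the simplicial coboundary, rather than merely agreeing up to an unspecified unit. A secondary, but essential, point is the clean reduction to squarefree multidegrees, which is what forces the restriction to induced subcomplexes $\Delta[F]$ in the first place; it also explains the hypothesis $i>0$, which sidesteps the degree-$(-1)$ reduced-homology bookkeeping attached to the trivial free summand $F_0=S$ of the resolution of $S/I_\Delta$.
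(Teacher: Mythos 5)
The paper offers no proof of \cref{thm:hochster} at all: the statement is quoted verbatim from \cite{MS05}, so there is no internal argument to compare against. Judged on its own, your proposal is correct and complete in outline; it is the classical direct proof of Hochster's formula (the one in Bruns--Herzog, Theorem 5.5.1, going back to Hochster's original argument): compute $\mathrm{Tor}^S_i(\Bbbk,S/I_\Delta)$ from the $\mathbb{Z}^n$-graded Koszul complex, kill every non-squarefree strand with the contraction you describe---your observation that a coordinate $k$ with $a_k\geq 2$ lies in every relevant support is exactly the needed point, since it makes multiplication by $x_k$ an isomorphism between the one-dimensional summands paired by the homotopy---and identify the strand of squarefree degree $\mathbf{e}_F$ with the reduced cochain complex of $\Delta[F]$ via $\sigma\mapsto F\setminus\sigma$, using $\tilde{H}^\bullet\cong\tilde{H}_\bullet$ over a field at the end. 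This is in fact a different route from the one in the cited source: \cite{MS05} derives the formula from the description of Betti numbers via upper Koszul simplicial complexes $K^{\mathbf{b}}(I)$ (their Theorem 1.34) combined with Alexander duality (their Corollary 5.12), so your proof is the more self-contained of the two, at the price of the sign verification you flag; that verification is routine and works out with the orientation on $\tau=F\setminus\sigma$ induced by the natural linear order on $F$, as carried out in detail in Bruns--Herzog. One point you handled correctly but should make explicit: the formula \eqref{hoch} as printed, with $\tilde{H}_{j-i-1}$ and the hypothesis $i>0$, is true only under the convention you adopt, namely $\beta_{i,j}=\dim_\Bbbk\mathrm{Tor}^S_i(\Bbbk,S/I_\Delta)_j$; under the other common convention, in which $\beta_{0,j}(I_\Delta)$ counts minimal generators of the ideal itself, the homological index shifts to $j-i-2$, which is precisely the version the paper actually uses later in \eqref{hochstergraph}. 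Your choice of convention, together with your closing remark explaining the role of $i>0$ (it sidesteps the $\tilde{H}_{-1}$ bookkeeping attached to the trivial summand $F_0=S$), thus quietly resolves an indexing inconsistency that the paper leaves implicit between its statement and its application of the theorem.
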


The \emph{edge ideal} of a graph $G$ on $n$ vertices is the monomial ideal defined as $\langle x_ix_j : (ij)\in E(G) \rangle\subseteq \Bbbk[x_1,\ldots, x_n]$.

\section{Proof of the main result}

A couple of preliminary computations using \texttt{Macaulay2} provide us with the first two total Betti numbers of the cut ideals, over $\mathbb{Q}$, for some path graphs $P_n$. They are listed in \cref{tab:tab1} \footnote{The blank in the last entry means the computation had not concluded after roughly one hour. For the next heaviest tasks, the computation of the syzygies of $I_{P_7}$ and the generators of $I_{P_8}$, the CPU time was 55.7 minutes and 8.8 seconds, respectively. We used a 8 AMD Opteron Dual-Core 2.6 GHz computing server with 64 GB RAM running Ubuntu 12.04. In each case, the generators were obtained with the \emph{4ti2}\cite{4ti2} interface for \texttt{Macaulay2}.}. 
\begin{table}[htbp!]
\begin{center}\begin{tabular}{c|cc}
 & $\beta_0$ & $\beta_1$ \\
 \hline
$I_{P_3}$ & 1 & 0 \\
$I_{P_4}$ & 9 & 16 \\
$I_{P_5}$ & 55 & 320 \\
$I_{P_6}$ & 285 & 4160 \\
$I_{P_7}$ & 1351 & 44800 \\
$I_{P_8}$ & 6069 & -
\end{tabular}
\caption{First two total Betti numbers of cut ideals of path graphs.}
\label{tab:tab1}
\end{center}
\end{table} 

We observe that the number of generators and first syzygies increases quickly when considering larger paths. Accordingly, direct use of the usual functions in \texttt{Macaulay2} (for example, \texttt{gens}, \texttt{syz}, \texttt{res}) becomes unfeasible when doing the computations. This motivates the general strategy of trying to relate combinatorial properties of the graphs to algebraic properties of their cut ideals, which we shall follow now.

Our approach derives from the results of Engstr\"om and Dochtermann in \cite{DE09} and consists of two steps:
\begin{itemize} 
\item Construct a Gr\"obner basis for $I_{T_n}$, and characterize the initial ideal from which it arises combinatorially. 
\item Regard the initial ideal gotten as the Stanley-Reisner ideal of a certain simplicial complex, and use \emph{Hochster's formula} to get an estimate for the Betti numbers of $I_{T_n}$.
\end{itemize}

To have a picture of the initial ideals associated to $I_{T_n}$, we present the complete Betti diagrams for their minimal free resolutions in \cref{tab:diag} for $n=4$ and $n=5$. Here we use the standard monomial ordering from \texttt{Macaulay2}, namely graded reverse lexicographic.
\begin{table}[htbp!]
\hspace{3cm}$n=4$\begin{minipage}[c]{0.9\linewidth}
\begin{center}
\begin{verbatim}
      total: 1 9 16 9 1
          0: 1 .  . . .
          1: . 9 16 9 .
          2: . .  . . 1
\end{verbatim}
\end{center}
\end{minipage}
\\[0.8cm]
$n=5$\begin{minipage}[c]{0.9\linewidth}
\begin{verbatim}
      total: 1 55 326 951 1744 2273 2273 1744 951 326 55  1
          0: 1  .   .   .    .    .    .    .   .   .  .  .
          1: . 55 320 897 1462 1437  836  282  54   6  .  .
          2: .  .   6  54  282  836 1437 1462 897 320 55  .
          3: .  .   .   .    .    .    .    .   .   .  .  1
\end{verbatim}
\end{minipage}
\caption{Betti diagrams for initial ideals of cut ideals of trees on four and five vertices.}
\label{tab:diag}
\end{table}

We start with an observation. Note that the identification $A|B\mapsto S_{A|B}$ defines an injective mapping from the set of cuts of $G$ into $\mathbf{2}^{E(G)}$. If $G$ is a tree, this map is a surjection onto $\mathbf{2}^{E(G)}$, and hence every subset of $E(G)$ corresponds bijectively to a cut of $G$. This allows us to think of the indeterminates in $R_{G}$ equivalently as being labelled by the subsets of $E(G)$ (see \cref{fig2}).

\begin{figure}[h]
\begin{center}
\begin{subfigure}[b]{0.5\textwidth}
\centering
\includegraphics[width=\textwidth]{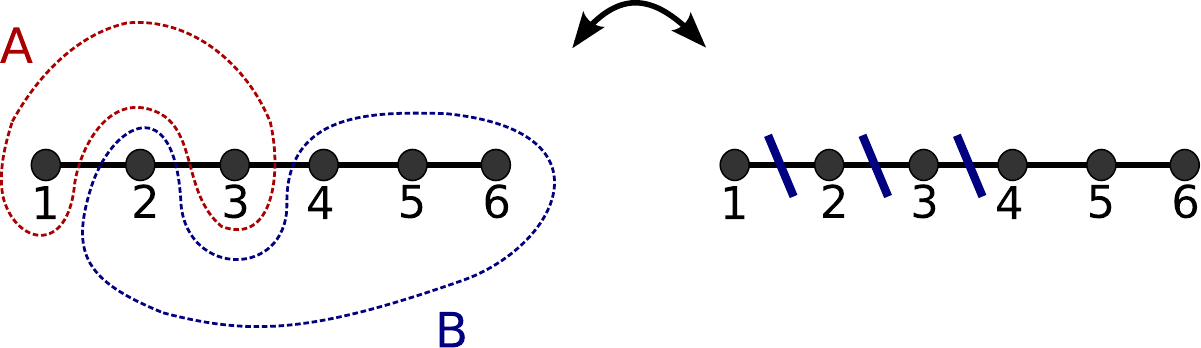}
\caption{Representation as a subset of edges.}
\label{fig2}
\end{subfigure} \qquad
\begin{subfigure}[b]{0.4\textwidth}
\centering
\includegraphics[width=\textwidth]{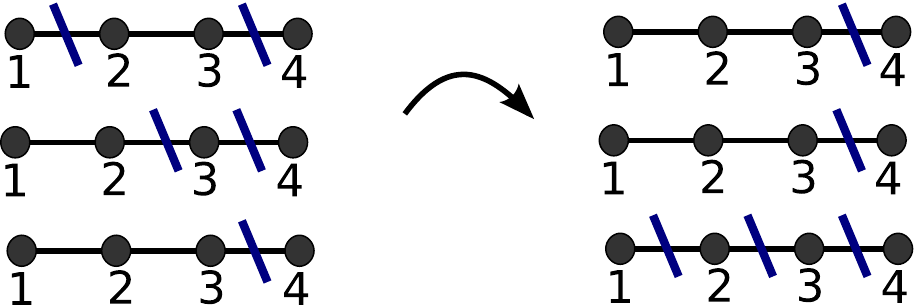}
\caption{Passing to normal form.}
\label{fig3}
\end{subfigure}
\caption{Equivalence of cuts and subsets of edges.}
\end{center}
\end{figure} 

We wish to introduce a \emph{normal form} for the monomials in $R_{T_n}$. To describe it, picture a monomial $m$ as stacked subsets of edges; then the normal form of $m$ is obtained by \emph{sending edges to the bottom} as illustrated in \cref{fig3}. As it turns out, the normal form of a monomial can be reached in steps by successively sending edges to the bottom for pairs of cuts in the monomial. The translation into algebraic terms goes as follows.

\begin{pro}
Let $I_{T_n}$ be a cut ideal associated to a tree $T_n$. Then there is a term order in $R_{T_n}$, with respect to which the set
\begin{equation*}
\mathcal{B}=\{r_X\cdot r_Y - r_{X \cup Y}\cdot r_{X \cap Y} : X,Y\subset E({T_n}) \mbox{ are incomparable }\}\subset I_{T_n}
\end{equation*}
is a Gr\"obner basis for $I_{T_n}$ (here we designate a cut of ${T_n}$ by the subset of $E({T_n})$ it cuts).
\label{gro}
\end{pro}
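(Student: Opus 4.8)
The binomials in $\mathcal{B}$ are exactly the join--meet relations of the Boolean lattice $L = \mathbf{2}^{E(T_n)}$, so my plan is to recognize $R_{T_n}/I_{T_n}$ as the toric ring of this distributive lattice and then run the standard straightening argument that promotes the join--meet relations to a Gr\"obner basis. The first, purely computational, step is to check that $\mathcal{B} \subseteq I_{T_n}$. Under the identification of a cut with the edge set $X$ it cuts, the map reads $\phi_{T_n}(r_X) = \prod_{e \in X} s_e \prod_{e \notin X} t_e$, and for incomparable $X,Y$ a term-by-term comparison over each edge $e$ (the four cases $e \in X\cap Y$, $e \in X\setminus Y$, $e \in Y \setminus X$, $e \notin X \cup Y$) shows $\phi_{T_n}(r_X r_Y) = \phi_{T_n}(r_{X\cup Y}\, r_{X\cap Y})$, so each generator lies in $\ker \phi_{T_n} = I_{T_n}$.

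Next I would fix the term order. Choose any linear extension of the inclusion order on $L$, i.e.\ a total order on the subsets of $E(T_n)$ refining $\subseteq$, and let $<$ be the degree reverse lexicographic order on $R_{T_n}$ whose variable order is the reverse of this linear extension (so that $r_{\emptyset}$ is the smallest variable). For incomparable $X,Y$ the four sets $X\cap Y \subsetneq X,Y \subsetneq X\cup Y$ are distinct and $X\cap Y$ is the strictly smallest, hence corresponds to the variable of largest index; comparing the exponent vectors of $r_X r_Y$ and $r_{X\cup Y}r_{X\cap Y}$, their last nonzero difference occurs at $r_{X\cap Y}$ and is negative, so degrevlex gives $\mathrm{in}_<(r_X r_Y - r_{X\cup Y}r_{X\cap Y}) = r_X r_Y$. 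Consequently $\mathrm{in}_<(\mathcal{B}) = \langle r_X r_Y : X,Y \text{ incomparable}\rangle$ is the edge ideal of the incomparability graph of $L$, and a monomial is standard with respect to it precisely when its support is a chain, i.e.\ when it is a multichain monomial $r_{Z_1}\cdots r_{Z_d}$ with $Z_1 \subseteq \cdots \subseteq Z_d$. This is exactly the normal form obtained by repeatedly ``sending edges to the bottom.''

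The heart of the argument is to upgrade this from a generating set to a Gr\"obner basis, which I would do by showing the multichain monomials are linearly independent modulo $I_{T_n}$. For a multichain $Z_1 \subseteq \cdots \subseteq Z_d$ one computes $\phi_{T_n}(r_{Z_1}\cdots r_{Z_d}) = \prod_{e \in E(T_n)} s_e^{\,c_e}\, t_e^{\,d - c_e}$, where $c_e = \#\{i : e \in Z_i\}$. Because the $Z_i$ form a chain, the data $(c_e)_e$ together with $d$ recover the multichain via $Z_i = \{e : c_e \geq d-i+1\}$, so distinct multichains have distinct monomial images, and these images are therefore $\Bbbk$-linearly independent. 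Since $\mathcal{B} \subseteq I_{T_n}$, reduction modulo $\mathcal{B}$ already expresses every element of $R_{T_n}/I_{T_n}$ in terms of multichain monomials, so these monomials span; being also independent, they form a basis. Hence $R_{T_n}/\mathrm{in}_<(\mathcal{B})$ and $R_{T_n}/I_{T_n}$ have the same Hilbert function; as passing to the initial ideal preserves the Hilbert function and $\mathrm{in}_<(\mathcal{B}) \subseteq \mathrm{in}_<(I_{T_n})$, the two initial ideals must coincide, which is precisely the statement that $\mathcal{B}$ is a Gr\"obner basis.

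I expect the two delicate points to be the verification that the chosen degrevlex order selects the incomparable product $r_X r_Y$ as the leading term of every relation (the sign bookkeeping in the exponent comparison, together with the fact that a linear extension makes $X\cap Y$ the unique minimum of the four sets), and the reconstruction of a multichain from the exponent vector of its image, which is what makes the spanning multichain monomials genuinely linearly independent rather than merely spanning. Both are manageable; alternatively, one can invoke Hibi's theorem on toric rings of distributive lattices directly, the computation above being the identification of $I_{T_n}$ with the corresponding Hibi ideal.
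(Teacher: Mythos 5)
Your proof is correct, and while it pivots on the same combinatorial fact as the paper's proof, the scaffolding around that fact is genuinely different. The paper (adapting the proof of Theorem 9.1 in \cite{S96}) weights each variable $r_X$ by the number of subsets incomparable to $X$, takes any term order refining these weights, and argues by minimal counterexample: a binomial $m-m'\in I_{T_n}$ with both monomials reduced modulo $\mathcal{B}$ and coprime would have chain support on disjoint sets of variables, yet $m$ and $m'$ must ``cut the same edges the same number of times,'' which forces $m=m'$. That forcing step is exactly your reconstruction formula $Z_i=\{e: c_e\geq d-i+1\}$, which you state and prove explicitly where the paper only asserts it. Beyond this shared core, you differ in two ways. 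First, your term order is degrevlex on a reversed linear extension rather than the incomparability-count weights, and you actually verify that it selects $r_X r_Y$ as the leading term of each relation; the paper never checks the corresponding claim for its weight order (it is true, but requires observing that the weight of $r_X r_Y$ strictly exceeds that of $r_{X\cup Y}r_{X\cap Y}$, a small convexity argument left implicit). Second, instead of the binomial minimal-counterexample scheme---which silently relies on the fact that the initial ideal of a toric ideal is generated by initial terms of binomials---you run the standard-monomial/Hilbert-function argument: multichain monomials span $R_{T_n}/I_{T_n}$ by terminating reduction and are linearly independent because their $\phi_{T_n}$-images are distinct monomials, so $\langle\mathrm{in}_<(\mathcal{B})\rangle\subseteq\mathrm{in}_<(I_{T_n})$ together with equality of Hilbert functions forces equality of ideals. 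Your route costs a little more bookkeeping but yields an explicit monomial basis of the quotient and avoids any special facts about binomial ideals; it also makes transparent the observation, worth recording, that $I_{T_n}$ is precisely the Hibi ideal of the Boolean lattice $\mathbf{2}^{E(T_n)}$, so the proposition is a special case of Hibi's theorem on distributive lattices. The paper's route is shorter once the standard toric facts are granted.
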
 
\begin{proof}
(Adapted from the proof of Theorem 9.1 in \cite{S96}.) Assign a weight to the indeterminate $r_X$ as the number of elements $X' \subset E(T_n)$ incomparable with $X$. Let $\prec$ be any term order refining the partial order given by the weights to a total order. We claim that $\mathcal{B}$ is a Gr\"obner basis for $I_{T_n}$ with respect to $\prec$. Assume to the contrary, that there is at least one binomial $b=m-m'\in I_{T_n}$ such that m is not divisible by any $r_X\cdot r_Y$ with $X,Y \subset E({T_n})$ incomparable. This implies that all indeterminates appearing in $m$ are mutually comparable. 

By taking $b$ to be the minimal binomial providing a counterexample, we may suppose that $m$ and $m'$ have no common factors, and further, that the (labels of the) indeterminates in $m'$ are also mutually comparable (otherwise we may reduce $m'$ modulo $\mathcal{B}$ to have it that way). Then $m$ and $m'$ have disjoint sets of indeterminates. But since $b$ belongs to $I_{T_n}$, $m$ and $m'$ must cut the same edges the same number of times (and equally for edges kept together). This can only happen if $m=m'$, because only then can the indeterminates inside $m$ and $m'$ be mutually comparable, thus leading us to a contradiction. Hence, $m$ must be divisible by some $r_X\cdot r_Y$ with $X,Y \subset E({T_n})$ incomparable.
\end{proof}

We will use the next lemma towards the end of the proof of \cref{thetheorem}. The reader can refer to \cite{MS05} for further details.

\begin{lem}[Upper semicontinuity, Theorem 8.29 in \cite{MS05}]
Fix a graded ideal $I$ in a polynomial ring $\Bbbk [x_1,\ldots, x_n]$. If $\mathrm{in}(I)$ is the initial ideal of $I$ with respect to some term order, then:
\begin{equation*}
\beta_{i,j}\left(I\right)\leq \beta_{i,j}\left(\mathrm{in}(I)\right) \mbox{ for all }i,j\in \mathbb{N}
\end{equation*}
\label{lem:upper}
\end{lem}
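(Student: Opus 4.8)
The statement is classical (it is quoted here as Theorem~8.29 of \cite{MS05}), so the plan is to reconstruct the standard \emph{Gr\"obner degeneration} argument: exhibit $\mathrm{in}(I)$ as the special fibre of a flat one-parameter family whose generic fibre is $I$, and then use that minimal Betti numbers can only grow under such a specialization. It is convenient to pass to quotient rings. Writing $S:=\Bbbk[x_1,\dots,x_n]$ and using the elementary identity $\beta_{i,j}(J)=\beta_{i+1,j}(S/J)$, valid for any graded ideal $J$, the asserted inequality between $I$ and $\mathrm{in}(I)$ is equivalent to $\beta_{i,j}(S/I)\le\beta_{i,j}(S/\mathrm{in}(I))$ for all $i,j$, which is what I would prove.

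First I would trade the term order for a weight. Since a Gr\"obner basis of $I$ is finite, there is a weight vector $w\in\mathbb{Z}_{\ge 0}^{n}$ representing the given order on the finitely many monomials occurring in it, so that $\mathrm{in}_w(I)=\mathrm{in}(I)$. Introducing an auxiliary variable $t$, I homogenize: for $f=\sum_\alpha c_\alpha x^\alpha$ set $\tilde f=\sum_\alpha c_\alpha x^\alpha t^{d-w\cdot\alpha}$ with $d=\max_\alpha w\cdot\alpha$, and let $\hat I\subseteq S[t]$ be generated by the homogenizations of a $w$-Gr\"obner basis. The quotient $N:=S[t]/\hat I$ is graded, and the crux is to check that $t$ is a nonzerodivisor modulo $\hat I$; granting this, $N$ is torsion-free, hence free (as $\Bbbk[t]$ is a PID), and therefore flat over $\Bbbk[t]$. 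Its dehomogenization at $t=1$ returns $S/I$, while reduction modulo $t$ replaces each generator by its top-weight form and yields the fibre $S/\mathrm{in}_w(I)=S/\mathrm{in}(I)$. Verifying that $t$ is a nonzerodivisor---equivalently, that the homogenized Gr\"obner basis really generates $\hat I$---is the precise place where the Gr\"obner hypothesis is indispensable, and I expect it to be the main obstacle.

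With flatness in hand the conclusion is short. Grade $S[t]$ so that its irrelevant ideal is $(x_1,\dots,x_n,t)$ and take a \emph{minimal} graded free resolution $G_\bullet\to N$ over $S[t]$; its differential matrices have entries in $(x_1,\dots,x_n,t)$. Because $N$ is $\Bbbk[t]$-flat, $t-c$ is a nonzerodivisor on $N$ and on each $G_i$, so tensoring $G_\bullet$ with $S[t]/(t-c)\cong S$ preserves exactness and produces a free resolution of the fibre $N_c$ whose graded ranks equal those of $G_\bullet$, independently of $c$. At $c=0$ the entries, lying in $(x_1,\dots,x_n,t)$, reduce modulo $t$ into $(x_1,\dots,x_n)$, so the specialized resolution is again minimal and $\beta_{i,j}(S/\mathrm{in}(I))=\beta_{i,j}(N_0)=\mathrm{rank}\,(G_i)_j$. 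At $c=1$ the specialized resolution of $N_1\cong S/I$ need not be minimal, and since minimal Betti numbers are bounded above by the ranks of \emph{any} free resolution, $\beta_{i,j}(S/I)\le\mathrm{rank}\,(G_i)_j$. Comparing the two yields $\beta_{i,j}(S/I)\le\beta_{i,j}(S/\mathrm{in}(I))$, and hence the lemma. The same mechanism can be read as upper semicontinuity of $c\mapsto\beta_{i,j}(N_c)$, with the minimal (generic) value attained at $S/I$ and the maximal value at the special fibre $S/\mathrm{in}(I)$; apart from the flatness verification, the only remaining care is to track the bigrading so that all rank comparisons are made degree by degree.
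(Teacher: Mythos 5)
The paper itself offers no proof of \cref{lem:upper}: it is imported verbatim as Theorem 8.29 of \cite{MS05}, so the only meaningful comparison is with the proof in that reference, and your reconstruction is essentially that proof. The architecture is correct and standard: trade $\prec$ for a weight vector $w$ representing it on the finitely many monomials of a Gr\"obner basis, homogenize with an auxiliary variable $t$ to obtain a $\Bbbk[t]$-flat family $N=S[t]/\hat{I}$ with special fibre $S/\mathrm{in}(I)$ and generic fibre $S/I$, specialize a minimal free resolution of $N$, and note that the specialization stays minimal at $t=0$ but is only some (possibly non-minimal) resolution at $t=1$. The two places you flag as delicate are indeed the only real technical content, and both are standard. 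First, that $t$ is a nonzerodivisor on $N$ (equivalently, that homogenizing a $w$-Gr\"obner basis generates the full, $t$-saturated homogenization of $I$) follows from the division algorithm for the weight order: every $f\in I$ admits a standard representation $f=\sum h_i g_i$ with no cancellation among top $w$-degrees, and homogenizing this expression writes $\tilde{f}$ in terms of the $\tilde{g}_i$; a complete write-up must include this, since it is exactly where the Gr\"obner hypothesis enters. Second, because $t-1$ is not homogeneous, the degree-by-degree rank comparison has to be run in the bigrading where $x_i$ has bidegree $(1,w_i)$ and $t$ has bidegree $(0,1)$: a bigraded minimal resolution of $N$ is also minimal for the first (standard) grading, so summing its ranks over the second degree gives exactly $\beta_{i,j}(S/\mathrm{in}(I))$ upon setting $t=0$, and an upper bound for $\beta_{i,j}(S/I)$ upon setting $t=1$, which is the inequality after the shift $\beta_{i,j}(J)=\beta_{i+1,j}(S/J)$. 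One cosmetic correction: torsion-freeness over the PID $\Bbbk[t]$ already gives flatness for arbitrary modules, whereas the claim ``torsion-free hence free'' needs finite generation or a graded argument; since flatness is all you use, nothing is harmed.
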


\begin{proof}[Proof of \cref{thetheorem}]
Let $\mathrm{in}\, I_{{T_n}}$ be the initial ideal giving rise to the Gr\"obner basis of $I_{T_n}$ presented in \cref{gro}. This monomial ideal can be regarded as the \emph{edge ideal} of a graph $\Gamma_{E({T_n})}$ with the elements of $\mathbf{2}^{E({T_n})}$ as the vertex set and the pairs of the incomparable subsets of $E({T_n})$ as the edge set. We call $\Gamma_{E({T_n})}$ the \emph{incomparability graph of} $\mathbf{2}^{E({T_n})}$; clearly, it only depends on $|E({T_n})|=n-1$.

\begin{figure}[h]
\centering
\includegraphics[width=0.15\textwidth]{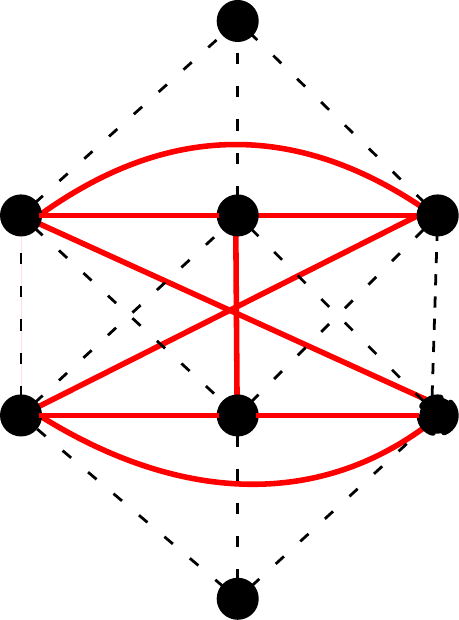}
\caption{$\Gamma_{E({T_n})}$ for $n=4$.}
\label{fig4}
\end{figure}

Now, extracting the insight from \cite{DE09}, we consider $\mathrm{in}\,I_{{T_n}}$ as the Stanley-Reisner ideal of the simplicial complex $\mathsf{Ind}(\Gamma_{E({T_n})})$. This means that we can use Hochster's formula to calculate the Betti numbers of $\mathrm{in}\,I_{{T_n}}$ with knowledge of the counts of induced subgraphs of $\Gamma_{E({T_n})}$ and the (dimension of the) reduced homology of their independence subcomplexes:
\begin{equation}
\beta_{i,j}\left(\mathrm{in}\,I_{T_n}\right)=\beta_{i,j}\left(I_{\mathsf{Ind}(\Gamma_{E({T_n})})}\right)= \sum_{\substack{F\subseteq V(\Gamma_{E({T_n})}) \\ |F|=j}}\dim_\Bbbk \left(\tilde{H}_{j-i-2}(\mathsf{Ind}(\Gamma_{E({T_n})}[F]))\right).
\label{hochstergraph}
\end{equation}

The enumeration of the induced subgraphs of $\Gamma_{E({T_n})}$ is a straightforward combinatorial calculation, which can be performed using inclusion-exclusion. We implemented this procedure in a \texttt{Python} script \cite{script} to get formulas for the number of induced subgraphs, and illustrate it with the enumeration of  \raisebox{-0.1ex}{\makebox[2ex][l]{\includegraphics[width=2ex]{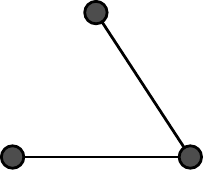}}}. 

We write the elements of $\Gamma_{E({T_n})}$ as tuples in $\{0,1\}^{n-1}$. Then, we can compare $X,Y,Z\in\Gamma_{E({T_n})} $ by indicating the number of entries $i$ for which $(X_i,Y_i,Z_i)$ attains every possible value. Let these numbers be $a,b,\ldots, h$, as in \cref{table3}.
\begin{figure}[h]
\begin{minipage}[c]{0.45\linewidth}
\begin{center}\begin{tabular}{c|cccccccc}
&$a$ & $b$ & $c$ & $d$ &$e$ &$f$ &$g$ &$h$ \\
\hline
$X$&0 & 1 & 0 & 1&0 & 1 & 0 & 1\\
$Y$&0 & 0 & 1 & 1&0 & 0 & 1 & 1\\
$Z$& 0& 0& 0&0&1&1&1&1 
\end{tabular} 
\end{center}
\end{minipage}\qquad
\begin{minipage}[c]{0.45\linewidth}
\begin{center}
\def\svgwidth{0.4\linewidth}
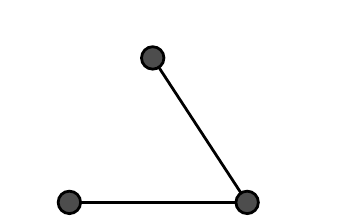
\end{center}
\end{minipage}
\caption{Comparison of $X,Y,Z\in\Gamma_{E({T_n})} $.}
\label{table3}
\end{figure}

The condition for the edge $XY$ to be present, for instance, is that both $b+f$ and $c+g$ be larger than zero. Hence, the number of labelled occurences of \raisebox{-0.1ex}{\makebox[2ex][l]{\includegraphics[width=2ex]{g33.pdf}}} in $\Gamma_{E({T_n})}$ is given by the sum:
\begin{align}
\sum_{\substack{a+b+\ldots+h=n-1, \\b+f>0,\ c+g>0,  \\c+d>0,\ e+f>0,\\b+d=0\ \mathtt{xor} \ e+g=0}}\binom{n-1}{a,b,\ldots,h}=&
\sum_{\substack{a+c+e+f+g+h=n-1,\\f>0,\ c+g>0,  \\c>0,\ e+f>0,\\b+d=0}}\binom{n-1}{a,c,e,f,g,h} \nonumber \\
&+\sum_{\substack{a+b+c+d+f+h=n-1,\\ b+f>0,\ c>0,  \\c+d>0,\ f>0,\\e+g=0}}\binom{n-1}{a,b,c,d,f,h} \nonumber \\
&-2\sum_{\substack{a+c+f+h=n-1,\\f>0,\ c>0,  \\ b+d=0, e+g=0}}\binom{n-1}{a,c,f,h}, \nonumber
\end{align}
The terms on the right hand side are decomposed according to inclusion-exclusion:
\begin{equation*}
\sum_{\substack{a+c+f+h=n-1\\f>0}}=\sum_{a+c+f+h=n-1}-\sum_{\substack{a+c+h=n-1\\f=0}},
\end{equation*}
and by taking symmetry into account, we obtain:
\begin{equation*}
\# \mbox{\raisebox{-0.1ex}{\makebox[2ex][l]{\includegraphics[width=2ex]{g33.pdf}}}}  (\Gamma_{E({T_n})})=6^{n-1}-2\cdot5^{n-1}+2\cdot 3^{n-1}-2^{n-1}.
\end{equation*}
Finally, \cref{tab:graphs} contains the (dimensions of the reduced) homologies of the independence complexes of the induced subgraphs making contributions. Note that, as long as the independence complexes of the graphs involved in equation \ref{hochstergraph} have torsion-free homology groups, we do not have to concern about the underlying field $\Bbbk$.
Putting this together with the counts for the induced subgraphs, we obtain the formulas in \cref{thetheorem}. 
The fact that these expression bound the Betti numbers of $I_{T_n}$ from above is a consequence of the well-known upper semicontinuity for the Betti numbers of a minimal free resolution, stated in \cref{lem:upper}. Thus, we conclude the proof.
\end{proof}

\begin{table}[h]\small
\begin{minipage}[c]{0.45\linewidth}
\begin{center}\begin{tabular}{|c|cc|}
\hline
\rule{0pt}{2.5ex} 
  Graph &  $\dim \tilde{H}_0$ & $\dim \tilde{H}_1$ \\
 \hline
 \rule{0pt}{2.5ex}
 \raisebox{-0.3ex}{\makebox[2ex][l]{\includegraphics[width=2ex]{g33.pdf}}} & 1 & 0  \\
 \raisebox{-0.3ex}{\makebox[2ex][l]{\includegraphics[width=2ex]{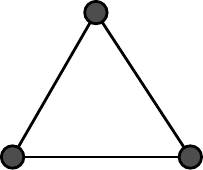}}} & 2 & 0  \\[0.05cm]
\hline
\end{tabular} 
\end{center}
\end{minipage} 
\hspace{0.5cm}
\begin{minipage}[c]{0.45\linewidth}
\begin{center}\begin{tabular}{|c|cc|}
\hline
\rule{0pt}{2.5ex} 
  Graph &  $\dim \tilde{H}_0$ & $\dim \tilde{H}_1$ \\
 \hline
 \rule{0pt}{2.5ex}
 \raisebox{-0.3ex}{\makebox[2ex][l]{\includegraphics[width=2ex]{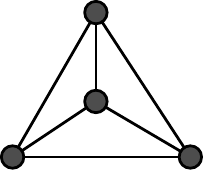}}} & 3 & 0  \\
 \raisebox{-0.3ex}{\makebox[2ex][l]{\includegraphics[width=2ex]{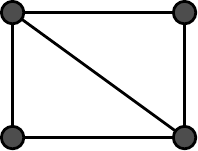}}} & 2 & 0  \\
 \raisebox{-0.3ex}{\makebox[2ex][l]{\includegraphics[width=2ex]{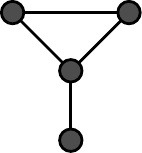}}} & 1 & 0  \\
\raisebox{-0.3ex}{\makebox[2ex][l]{\includegraphics[width=2ex]{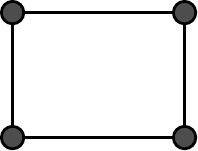}}} & 1 & 0  \\ 
\raisebox{-0.3ex}{\makebox[2ex][l]{\includegraphics[width=2ex]{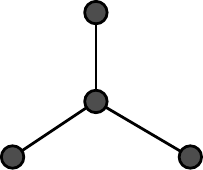}}} & 1 & 0  \\
\raisebox{-0.3ex}{\makebox[2ex][l]{\includegraphics[width=2ex]{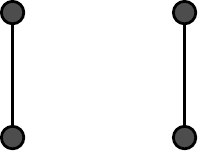}}} & 0 & 1  \\[0.05cm]
\hline
\end{tabular}
\end{center}
\end{minipage}
\begin{center}\begin{tabular}{|c|cc|c|cc|}
\hline
\rule{0pt}{2.5ex} 
  Graph &  $\dim \tilde{H}_0$ & $\dim \tilde{H}_1$ &  Graph & $\dim \tilde{H}_0$ &  $\dim \tilde{H}_1$ \\
 \hline
 \rule{0pt}{3ex}\hspace{-0.2cm}
\raisebox{-0.3ex}{\makebox[2ex][l]{\includegraphics[width=2.5ex]{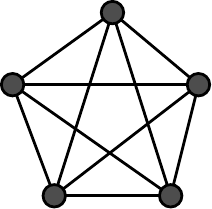}}} & 4 & 0 & \raisebox{-0.3ex}{\makebox[2ex][l]{\includegraphics[width=2.5ex]{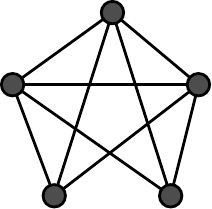}}} & 3 & 0 \\
\raisebox{-0.3ex}{\makebox[2ex][l]{\includegraphics[width=2ex]{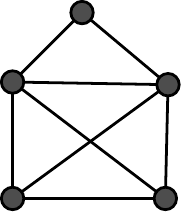}}} & 2 & 0 & \raisebox{-0.3ex}{\makebox[2ex][l]{\includegraphics[width=2ex]{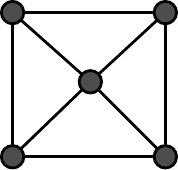}}} & 2 & 0 \\
\raisebox{-0.3ex}{\makebox[2ex][l]{\includegraphics[width=2ex]{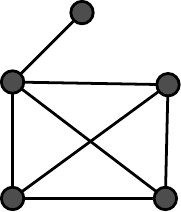}}} & 1 & 0 & \raisebox{-0.3ex}{\makebox[2ex][l]{\includegraphics[width=2ex]{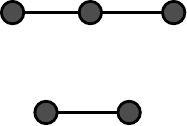}}} & 0 & 1 \\
\raisebox{-0.3ex}{\makebox[2ex][l]{\includegraphics[width=2ex]{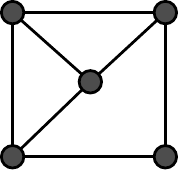}}} & 1 & 0 & \raisebox{-0.3ex}{\makebox[2ex][l]{\includegraphics[width=2ex]{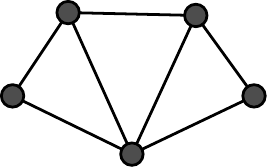}}} & 1 & 0 \\
\raisebox{-0.3ex}{\makebox[2ex][l]{\includegraphics[width=2ex]{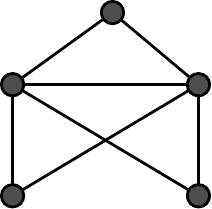}}} & 2 & 0 & \raisebox{-0.3ex}{\makebox[2ex][l]{\includegraphics[width=2ex]{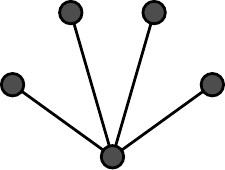}}} & 1 & 0 \\
\raisebox{-0.3ex}{\makebox[2ex][l]{\includegraphics[width=2ex]{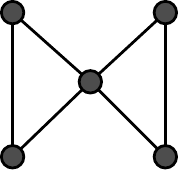}}} & 1 & 1 & \raisebox{-0.3ex}{\makebox[2ex][l]{\includegraphics[width=2ex]{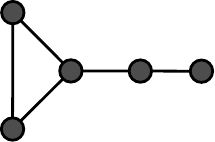}}} & 0 & 1 \\
\raisebox{-0.3ex}{\makebox[2ex][l]{\includegraphics[width=2ex]{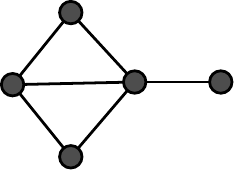}}} & 1 & 0 & \raisebox{-0.3ex}{\makebox[2ex][l]{\includegraphics[width=2ex]{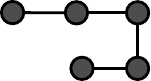}}} & 0 & 1 \\
\raisebox{-0.3ex}{\makebox[2ex][l]{\includegraphics[width=2ex]{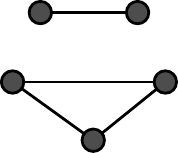}}} & 0 & 2 & \raisebox{-0.3ex}{\makebox[2ex][l]{\includegraphics[width=2ex]{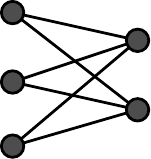}}} & 1 & 0 \\
\raisebox{-0.3ex}{\makebox[2ex][l]{\includegraphics[width=2ex]{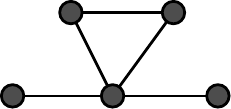}}} & 1 & 0 & \raisebox{-0.3ex}{\makebox[2ex][l]{\includegraphics[width=2ex]{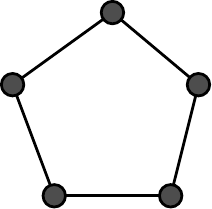}}} & 0 & 1 \\[0.05cm]
\hline
\end{tabular}
\end{center}
\caption{\small Contributions to the reduced homology over $\mathbb{Z}$ of the independence complexes of graphs. Notice that there is no torsion in any of these homology groups.}
\label{tab:graphs}
\end{table}

Below, in \cref{tab2}, are the estimates for the Betti numbers of $I_{{T_n}}$ for $n$ up to nine.

\begin{table}[htbp!]
\begin{center}\begin{tabular}{c|cccccc}
 $n$& $\beta_{0,2}$ & $\beta_{1,3}$&$\beta_{2,4}$&$\beta_{1,4}$&$\beta_{2,5}$ &$\beta_{3,5}$  \\
 \hline
$3$ & 1 & 0 &0 &0 & 0 & 0\\
$4$ & 9 & 16 &9 & 0 & 0 & 0\\
$5$ & 55 & 320 & 897&6 & 54 & 1450 \\
$6$ & 285 & 4160 & 32025&150 & 3380 & 156824\\
$7$ & 1351 & 44800 & 810255 & 2280& 115950 & 9798758\\
$8$ & 6069 & 435356 & 17298519 &27300  & 2984380 & 474814396\\
$9$ & 26335 & 3978240 & 335187657&283626 & 64924734 & 19911592842
\end{tabular} \caption{The Betti numbers of the initial ideals. Obtained by using subgraph counts and the homology of independence complexes.}
\label{tab2}
\end{center}
\end{table}

\section{Further remarks}

A few interesting questions were left unaddressed in this note. 
\begin{enumerate}
\item Does the combinatorial description we used for the initial ideal of $I_{T_n}$ provide information about the cellular complexes supporting the minimal cellular resolution of $\mathrm{in}\,I_{{T_n}}$?. Our initial computations of the minimal free resolutions of $\mathrm{in}\,I_{{T_n}}$  with $\texttt{Macaulay2}$ showed that the minimal free resolution of a cut ideal of a tree on four vertices is supported by the polytopal complex in \cref{fig:fig5}.
\begin{figure}[htbp!]
\begin{center}
\includegraphics[width=0.35\textwidth]{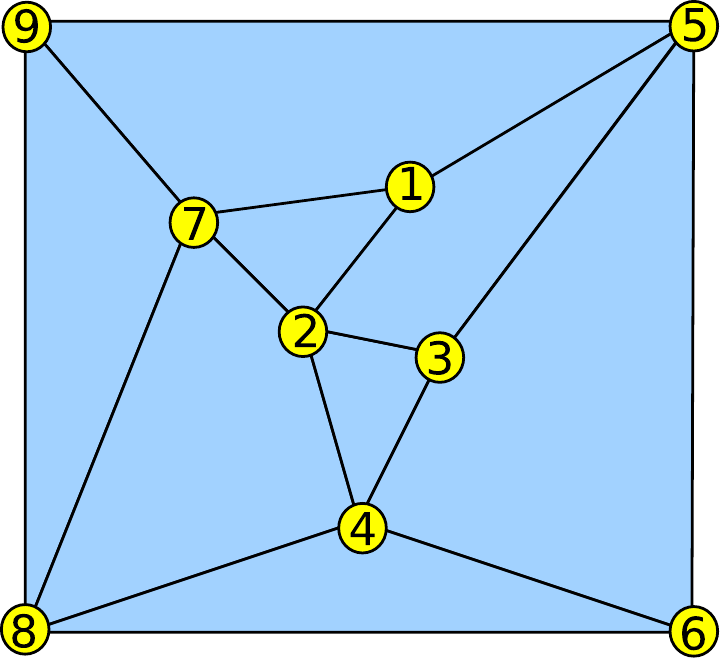}
\caption{A Schlegel diagram of a polytopal complex supporting the minimal free resolution of the initial monomial ideal of $P_4$. The monomials corresponding to the vertices are, respectively, $r_{1|234}r_{12|34},$ $r_{1|234}r_{123|4},$ $r_{12|34}r_{123|4},$ $r_{2|134}r_{123|4},$  $r_{12|34}r_{14|23},$ $r_{2|134}r_{14|23},$ $r_{1|234}r_{124|3},$ $r_{2|134}r_{124|3},$ $r_{14|23}r_{124|3}$.}
\label{fig:fig5}
\end{center}
\end{figure}

\item Do our methods allow us to establish polytopality of the supporting cell complexes for the cut ideals of trees of any size?. Do they allow to stablish the symmetry of the $f$-vector of such cellular complexes (which would reflect the arithmetic-Gorenstein nature of cut ideals of trees, which was established in \cite{NP08}).

\item As a last comment, we mention that a computation of the minimal free resolution of the cut ideal of a tree on five vertices yielded a non-unimodal Betti vector $(1, 55, 320, 891, 1436, 1375, 1375, 1436, 891, 320, 55, 1)$. As seen in \cref{tab:diag}, this property washed away when looking at the minimal free resolution of an initial ideal. Is it possible to recover a non-unimodal Betti vector from the initial ideal of cut ideals of trees with respect to some term order? This would amount to a cell complex with a non-unimodal $f$-vector supporting a minimal cellular resolution of the initial ideal. That would be of independent interest in the case that the cell complexes from the previous paragraph are polytopal.

\end{enumerate}

 \subsubsection*{Acknowledgements}
 The authors would like to thank Alexander Engstr\"om for introducing them to the problem and for inspiring discussions and continuous motivation. Also, they would like to thank the Institut Mittag-Leffler for its hospitality, and Alexander Engstr\"om and Bruno Benedetti for organizing the summer school \emph{``Discrete Morse Theory and Commutative Algebra,''} which took place between July 18 and August 2, 2012 at the Institut Mittag-Leffler, and where this project began.

\begin{flushleft}
Samu Potka\\
Aalto University\\
Department of Mathematics and Systems Analysis\\
PO Box 11100\\
FI-00076 Aalto\\
Finland\\
e-mail: samu.potka@aalto.fi
\end{flushleft}

\begin{flushleft}
Camilo Sarmiento\\
Max-Planck-Institute for Mathematics in the Sciences\\
Inselstrasse 22\\
DE-04109 Leipzig\\
Germany\\
e-mail: sarmient@mis.mpg.de
\end{flushleft}

\end{document}